\documentclass[11pt]{amsart}

\usepackage{amsmath,amssymb,amsfonts,amsthm}
\usepackage{mathrsfs}
\usepackage{hyperref}
\usepackage{url}       

\usepackage[all]{xy}

\newtheorem{Thm}{Theorem}[section]

\newtheorem{Lem}[Thm]{Lemma}

\newtheorem{Conj}[Thm]{Conjecture}
\newtheorem{Def}[Thm]{Definition}
\newtheorem{Rem}[Thm]{Remark}
\newtheorem{Question}[Thm]{Question}

\newcommand{\Fpbar}{\overline{\mathbb F}_p}

\newcommand{\GL}{\operatorname{GL}}
\newcommand{\Spec}{\operatorname{Spec}}
\newcommand{\Div}{\operatorname{Div}}
\newcommand{\Gal}{\operatorname{Gal}}
\newcommand{\Ker}{\operatorname{Ker}}

\title[Finiteness of Geometric Representations for Varieties]{On the Finiteness of Geometric Representations for Varieties over Finite Fields}

\author{Yufan Luo}

\address{Shanghai Institute for Mathematics and Interdisciplinary Sciences (SIMIS), Shanghai 200433, China
}
\address{Research Institute of Intelligent Complex Systems, Fudan University, Shanghai
	200433, China}
\email{yufanluo@hotmail.com}

\subjclass[2020]{14G15, 14H30, 11F80}

\keywords{Algebraic fundamental group, ramification, mod $p$ representations}

\begin{document}
	
\begin{abstract}
	Let $p$ be a prime number, and let $k$ be a finite field of characteristic different from $p$. Let $X$ be a normal geometrically connected variety over $k$, let $\overline X$ be a compactification of $X$, and let $Z=\overline X\setminus X$. Let $D$ be an effective Cartier divisor on $\overline X$ whose support is contained in $Z$. Motivated by Hiranouchi's Hermite--Minkowski type theorem for varieties over finite fields, we formulate a finiteness conjecture for continuous semisimple geometric representations
	$$
	\pi_1(X,D)\longrightarrow \operatorname{GL}_n(F),
	$$
	where $\pi_1(X,D)$ is Hiranouchi's fundamental group with ramification bounded by $D$, and $F$ is an algebraically closed field of characteristic $p$ endowed with the discrete topology. We prove this conjecture for odd $p$ in the following two cases: for curves with arbitrary ramification bound $D$, and for varieties of arbitrary dimension in the tame case, namely $D=0$. Furthermore, for arbitrary $p$, we prove the finiteness for those representations which admit a lift to characteristic zero.
\end{abstract}

	\maketitle
	\tableofcontents

	\section{Introduction}
	
	The classical Hermite--Minkowski theorem asserts that, for a given number field, there exist only finitely many extensions of a prescribed degree which are unramified outside a fixed finite set of primes. This fundamental finiteness theorem was generalized to higher-dimensional arithmetic schemes by Faltings. In the function field setting, Deligne introduced the notion of ramification bounded by an effective Cartier divisor $D$ at infinity for lisse $\ell$-adic sheaves, as explained by Esnault and Kerz
	\cite[Def.~4.6]{EK11} and \cite[Def.~3.3]{MR3058662}. In this framework, Hiranouchi introduced a fundamental group $\pi_1(X,D)$ classifying finite étale covers of $X$ whose ramification is bounded by $D$, and established a corresponding Hermite--Minkowski type theorem for varieties over finite fields.
	
	Let $X$ be a variety over a finite field $k$, i.e. a connected separated
	scheme of finite type over $k$. Let $\overline X$ be a compactification of $X$, and let $Z=\overline X\setminus X$. Let $\Div_Z^+(\overline X)$ denote the monoid of effective Cartier divisors
	$D$ on $\overline X$ whose support is contained in $Z$. For $
	D\in \Div_Z^+(\overline X)$,
	Hiranouchi \cite[Definition~2.4]{MR3622140} defined a quotient
	$$
	\pi_1(X\subset \overline X,D)
	$$
	of the étale fundamental group $\pi_1(X)$, which classifies finite étale
	covers of $X$ whose ramification is bounded by $D$. We omit base points from
	the notation throughout. In the following, we write $\pi_1(X,D)$ for
	$\pi_1(X\subset \overline X,D)$ when we need not specify the fixed compactification $\overline X$ of $X$. We recall that Hiranouchi's definition is a covering-theoretic analogue of the bounded ramification condition for lisse $\ell$-adic sheaves, rather than a definition in terms of conductors of $\ell$-adic sheaves. It is first given for finite extensions of complete discrete valuation fields, then for finite étale covers of curves, and finally for higher-dimensional varieties by testing after pullback to curves. It is proved in \cite[Theorem~3.4]{MR3622140} that $\pi_1(X,D)$ is
	small; that is, for each positive integer $m$, there are only finitely many
	finite étale covers of $X$ of degree at most $m$ whose ramification is
	bounded by $D$.
	
	A notable application of this finiteness property concerns continuous representations of $\pi_1(X,D)$. Let $F$ be an algebraically closed field endowed with the discrete topology. A continuous representation
	$$
	\rho:\pi_1(X,D)\longrightarrow \GL_n(F)
	$$
	is called geometric, in the sense of \cite[Definition 3.5]{MR3622140}, if its image coincides with the image of the geometric fundamental group. More precisely, if
	$$
	\pi_1(X,D)^0:=\Ker\bigl(\pi_1(X,D)\longrightarrow \pi_1(\Spec k)\bigr),
	$$
	then $\rho$ is geometric if
	$$
	\rho(\pi_1(X,D))=\rho(\pi_1(X,D)^0).
	$$
	When $X$ is normal, this condition is equivalent to saying that the finite extension of the function field $k(X)$ cut out by $\rho$ contains no non-trivial constant field extension.
	
	In \cite[Corollary 3.6]{MR3622140}, it was proved that, for a normal variety $X$, there are only finitely many isomorphism classes of continuous semisimple geometric representations
	$$
	\pi_1(X,D)\longrightarrow \GL_n(F)
	$$
   in the following two cases: either the image is solvable, or the coefficient field $F$ has characteristic zero.
	
	The case where $F$ has positive characteristic and the image is not assumed to be solvable remains open in general. This case is substantially different from the characteristic zero case. In characteristic zero, Jordan's theorem asserts that every finite subgroup of $\GL_n(F)$ contains an abelian normal subgroup of index bounded only in terms of $n$. This kind of uniform group-theoretic control is no longer available in characteristic $p>0$. Indeed, already for $n\geq 2$, the finite groups $\GL_n(\mathbb F_{p^r})$ embed into $\GL_n(F)$ for all $r\geq 1$, and they do not contain abelian normal subgroups of uniformly bounded index. Thus, Hiranouchi's finiteness theorem for representations with solvable image does not imply the desired finiteness for arbitrary semisimple representations in positive characteristic.
	
	Motivated by Hiranouchi's result and \cite[Theorem 1.3]{afinite}, we formulate the following conjecture.
	
	\begin{Conj}\label{conj:main}
		Let $p$ be a prime number, and let $k$ be a finite field of characteristic different from $p$. Let $X$ be a normal geometrically connected variety over $k$, let $\overline X$ be a compactification of $X$, and put
		$
		Z=\overline X\setminus X.
		$
		Let
		$
		D\in \Div_Z^+(\overline X)
		$
		be an effective Cartier divisor. Then, for every positive integer $n$, there are only finitely many isomorphism classes of continuous semisimple geometric representations
		$$
		\rho:\pi_1(X,D)\longrightarrow \GL_n(F),
		$$
		where $F$ is an algebraically closed field of characteristic $p$ endowed with the discrete topology. 
	\end{Conj}
\begin{Rem}
	If the coefficient field is replaced by a fixed finite field $F_0$ of
	characteristic $p$, then the corresponding finiteness statement follows
	immediately from the smallness of $\pi_1(X,D)$, and no semisimplicity or
	geometricity assumption is needed. The point of Conjecture~\ref{conj:main}
	is that the coefficient field $F$ is algebraically closed, hence infinite.
	In this setting, smallness alone gives no uniform control on the finite
	field of definition of a representation. Without the semisimplicity and
	geometricity assumptions, one can easily construct counterexamples already
	in the abelian case.
\end{Rem}
	
	The purpose of this paper is to verify Conjecture \ref{conj:main} in three important cases. The first case is the one-dimensional case, where $X$ is a curve. In this case, we prove the conjecture for an arbitrary ramification bound $D$. 
	
	\begin{Thm}\label{main1}
		Let $p$ be an odd prime number, and let $k$ be a finite field of characteristic different from $p$. Let $X$ be a normal geometrically connected curve over $k$, let $\overline X$ be a compactification of $X$, and put
		$
		Z=\overline X\setminus X.
		$
		Let
		$
		D\in \operatorname{Div}_Z^+(\overline X).
		$
		Then, for every positive integer $n$, there are only finitely many isomorphism classes of continuous semisimple geometric representations
		$$
		\rho:\pi_1(X,D)\longrightarrow \operatorname{GL}_n(F),
		$$
		where $F$ is an algebraically closed field of characteristic $p$ endowed with the discrete topology.
	\end{Thm}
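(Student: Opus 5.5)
Since $F$ is algebraically closed and $\pi_1(X,D)$ is small, a semisimple representation $\rho$ has finite image and is defined over some finite field $\mathbb F_q\subset F$. The task is therefore to bound $q$ uniformly in terms of $(X,D,n)$. Once $q$ is bounded, smallness of $\pi_1(X,D)$ yields finiteness: there are only finitely many open normal subgroups of index at most $|\GL_n(\mathbb F_q)|$, and for each such finite quotient only finitely many representations. Semisimplicity and Brauer--Nesbitt then identify each isomorphism class through its characteristic polynomials. My plan is to use a Galois/Frobenius argument in the style of Deligne's finiteness and \cite[Theorem 1.3]{afinite}, reducing to the finiteness of traces of Frobenius elements.

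First, I would use geometricity. Let $k_m$ be the extension of degree $m$ and $X_m=X\otimes k_m$. Then $\rho|_{\pi_1(X_m,D)}$ has the same image as $\rho$. Hence for every closed point $x$, $\rho(\mathrm{Frob}_x)$ lies in a finite group $G\subset \GL_n(\overline{\mathbb F}_p)$. Second, I would produce a ``companion''-type constraint that bounds the field generated by the traces. The natural tool is a lift to characteristic zero, or more concretely Lafforgue's theorem applied to mod $p$ representations. Since $\ell\ne p$ forces $p$ to be distinct from the characteristic of $k$, the image is a finite group whose order may be divisible by $p$. I would attempt the following. Using Jordan-type results in characteristic $p$ (Larsen--Pink / Nori), $G$ has a normal subgroup $G^+$ generated by $p$-elements. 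This subgroup is of Lie type over some $\mathbb F_{p^r}$ and has bounded index, modulo an abelian normal subgroup of prime-to-$p$ order. The abelian and bounded-index parts are handled by Hiranouchi's solvable case \cite[Corollary 3.6]{MR3622140}. The real content is bounding $r$.

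To bound $r$ on curves, I would consider the Lie-type quotient $G^+\to H(\mathbb F_{p^r})$. The corresponding cover $Y\to X$ is geometric with ramification bounded by $D$, and it has Galois group containing a group of Lie type of rank at most $n$ over $\mathbb F_{p^r}$. Its $\ell$-primary structure is the key point: the residue characteristic $\ell$ of $k$ differs from $p$. Since ramification is bounded, wild ramification ($\ell$-part) is controlled, while tame inertia is procyclic of prime-to-$\ell$ order. I would then use the structure of the geometric fundamental group of a curve: the maximal prime-to-$\ell$ quotient of $\pi_1(X_{\bar k})$ is topologically finitely generated, say by $N=2g+|Z|$ generators. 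Bounded ramification ensures that the wild part contributes a quotient of bounded size, again using smallness. Hence $G$ is generated by $N'$ elements, with $N'$ depending only on $(X,D)$, together with the action of Frobenius. Geometricity gives more: Frobenius acts on $G$ via an element of $G$ itself. So the arithmetic $\pi_1$ maps onto $G$ with Frobenius inner. This is where odd $p$ is needed; I would exploit the Frobenius eigenvalue constraint (weights) through the action on $H^1$, the $p$-Sylow/$\mathbb F_{p^r}$ Lie algebra action, to show that $r$ is bounded. Concretely, $\mathfrak{h}\otimes\mathbb F_{p^r}$ appears as a subquotient of a $\mathbb F_p[\pi_1]$-module of bounded $\mathbb F_p$-dimension coming from $H^1(X_{\bar k},\cdot)$ of bounded rank. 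This would rule out large $r$, because $\dim_{\mathbb F_p}$ of that module would grow with $r$.

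The main obstacle is this last step: turning ``bounded generation plus inner Frobenius'' into a bound on the field of definition of the Lie-type group. Bounded generation alone does not suffice, since $\mathrm{SL}_2(\mathbb F_{p^r})$ is 2-generated for all $r$. My first attempt would be to use Lafforgue-style compatibility via lifting: lift $\rho$ to characteristic zero where possible. Failing that, I would use a Chebotarev argument. Traces of Frobenius at a fixed finite set of points determine $\rho$, and each trace is a sum of $n$ roots of unity of order dividing the bounded exponent of $\rho(\mathrm{Frob}_x)$ modulo a $p$-part. Taking $p$ odd avoids the characteristic-2 exceptional isogenies when comparing $G^+$ to its Lie algebra. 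I would then bound the field $\mathbb F_p(\mathrm{tr}\rho(\mathrm{Frob}_x))$ uniformly using the Weil bound for $|X(k_m)|$ together with Chebotarev density in the cover cut out by $\rho$.
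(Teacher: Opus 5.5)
There is a genuine gap, and it is the step you flag as the main obstacle. The proposal never bounds the field of definition $\mathbb F_{p^r}$ of the image in terms of $(X,D,n)$ alone, and that bound is the entire content of the theorem. The surrounding reductions are fine but easy: finiteness once $q$ is bounded, Frobenius acting by inner automorphisms because $\rho$ is geometric, and bounded generation.

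None of your suggested routes to the bound works as stated.
\begin{enumerate}
\item The elements $\rho(\mathrm{Frob}_x)$ do not have bounded order, since $\mathrm{SL}_2(\mathbb F_{p^r})$ contains elements of order $p^r+1$. So their traces are not sums of roots of unity of bounded order.
\item Choosing a fixed finite set of closed points whose Frobenius traces determine $\rho$ requires an a priori bound on the degree of the cover cut out by $\rho$, via effective Chebotarev. That is circular, and adding the Weil bound does not change this.
\item The $H^1$ step is not an argument. The natural bounded quantity, $\dim_{\Fpbar}H^1(X_{\bar k},\mathrm{ad}\,\rho)$, is bounded by Grothendieck--Ogg--Shafarevich uniformly in $r$, so it carries no information about $r$. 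You exhibit no $\mathbb F_p$-module of bounded dimension containing $\mathfrak h\otimes\mathbb F_{p^r}$.
\item Peeling off solvable layers with Hiranouchi's solvable-image result leaves the Lie-type part, which is the whole difficulty, untouched. That result applies to representations whose entire image is solvable.
\end{enumerate}
Also, odd $p$ has nothing to do with characteristic-$2$ isogenies. Bounding $r$ for geometric mod $p$ representations of a curve is a deep arithmetic statement, not something these soft arguments reach.

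The paper takes that arithmetic input as a black box. It first reduces to $F=\Fpbar$, since semisimple representations of finite groups are defined over $\Fpbar$. It then passes to the normalization, so that $\overline X$ is a smooth curve. Lemma~\ref{lem:bounded-to-artin} converts ramification bounded by $D$ into Artin conductor bounded by a divisor $R_D$ depending only on $D$ and $n$:
\begin{enumerate}
\item triviality of $G^{m_z+}_{k(X)_z}$ bounds the Swan conductor at $z$ in terms of $m_z$ and $n$;
\item the tame part contributes at most $n$;
\item $\rho$ is unramified on $X$.
\end{enumerate}
Geometricity is exactly the condition that the fixed field of $\Ker\rho$ contains no nontrivial constant field extension. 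The finiteness theorem for semisimple geometric mod $p$ representations of $G_{k(X)}$ with bounded conductor, \cite[Theorem 1.3]{afinite} (Theorem~\ref{thm:function-field}), then finishes the proof. That theorem is where both the real work and the hypothesis that $p$ is odd live. As the paper recalls after Question~\ref{question:lifting}, it is tied to de Jong's conjecture (B\"ockle--Khare, Gaitsgory), which provides lifts of such mod $p$ representations to characteristic zero. Deligne's finiteness up to twist and the finiteness of geometric characters then conclude, exactly as in the proof of Theorem~\ref{main3}. Your remark about lifting ``where possible'' points in this direction, but without that lifting theorem, or the cited finiteness result, your argument does not close.
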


The second case is the tame case, corresponding to $D=0$ for varieties with arbitrary dimension.
	
	\begin{Thm}\label{main2}
		Let $p$ be an odd prime number, and let $k$ be a finite field of characteristic different from $p$. Let $X$ be a normal geometrically connected variety over $k$, and let $\overline X$ be a compactification of $X$. Then, for every positive integer $n$, there are only finitely many isomorphism classes of continuous semisimple geometric representations
		$$
		\rho:\pi_1(X,0)\longrightarrow \GL_n(F),
		$$
		where $F$ is an algebraically closed field of characteristic $p$ endowed with the discrete topology.
	\end{Thm}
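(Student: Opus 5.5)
Our plan is to reduce Theorem~\ref{main2} to the curve case, Theorem~\ref{main1}, via a Lefschetz-type argument for the tame fundamental group, together with a rigidity statement: a semisimple representation is determined by its restriction to a suitably chosen curve.

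\textbf{Step 1: reduce to smooth $X$ and pass to finite coefficients.} We first replace $X$ by a dense open smooth subscheme $U$. The map $\pi_1(U,0)\to\pi_1(X,0)$ is surjective because $X$ is normal. Tameness is preserved by restriction, and geometricity is preserved because $U$ and $X$ have the same constant field. So it suffices to treat smooth $X$. Each continuous $\rho$ has finite image, and by semisimplicity it is determined up to isomorphism by its Brauer character. Equivalently, $\rho$ is defined over a finite field $\mathbb F_q\subset F$ and is determined by the characteristic polynomials of Frobenius elements at closed points of $X$ (Chebotarev plus Brauer--Nesbitt). The task is therefore to bound $q$, or more precisely to show that only finitely many collections of such characteristic polynomials occur.

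\textbf{Step 2: choose a good curve.} For smooth $X$ we want a smooth, geometrically connected curve $C\subset X$ over $k$ with two properties:
\begin{itemize}
\item[(i)] the induced map $\pi_1^{t}(C)\to \pi_1^{t}(X)$ on tame fundamental groups is surjective;
\item[(ii)] $C$ extends to a curve $\overline C\subset\overline X$, so that tame ramification along $Z$ restricts to tame ramification, i.e.\ $D=0$ pulls back to $D=0$.
\end{itemize}
Property (ii) is automatic from Hiranouchi's definition, which tests ramification by pulling back to curves. Property (i) is the tame Lefschetz theorem of Kerz--Schmidt (via Drinfeld/Wiesend-type results): a suitable hyperplane-section curve, passing through enough chosen points and defined over $k$ after Poonen's Bertini theorem over finite fields, has tame $\pi_1$ surjecting onto that of $X$.

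There is one subtlety. Surjectivity onto the whole $\pi_1(X,0)$ is only needed modulo kernels of bounded index, because all representations factor through finite quotients. But the relevant quotients are unbounded, since $q$ is unknown. I would therefore rely on a genuine surjectivity statement for tame $\pi_1$ of a well-chosen curve. If that is unavailable, the fallback is to use, for each finite quotient, a curve depending on the quotient, and then control things uniformly by degree. Getting surjectivity uniformly is the step I expect to be the main obstacle. I would first try the Esnault--Kindler/Drinfeld approach: fix a finite set of points and note that the curve through them detects all semisimple representations up to the relevant rank $n$, which only needs $\rho|_C$ to determine $\rho$.

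\textbf{Step 3: restrict and conclude.} Once restriction to $C$ is injective on isomorphism classes of semisimple rank-$n$ tame geometric representations, the argument finishes as follows. The image $\rho(\pi_1(C,0))$ equals the full image of $\rho$, so $\rho|_C$ stays semisimple. Geometricity also passes to $C$, since $C$ is geometrically connected and $\pi_1(C)^0$ surjects onto $\pi_1(X)^0$. By Theorem~\ref{main1} with $D=0$ on $\overline C$, only finitely many $\rho|_C$ occur, and hence only finitely many $\rho$.

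If only the weaker Drinfeld-style statement is available, I would instead use Frobenius characteristic polynomials. Theorem~\ref{main1} bounds the finite set of possible polynomials on $C$. Combined with Deligne-type finiteness, namely that traces over points of bounded degree determine a semisimple representation of bounded rank and tame ramification, this bounds the field of definition of the traces, and finiteness follows from the smallness of $\pi_1(X,0)$.
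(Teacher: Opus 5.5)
Your main line is the paper's proof. You reduce to $X$ smooth (quasi-projective, with a normal projective compactification) and take a single smooth geometrically connected curve $f\colon C\to X$ with $\pi_1^t(C)\to\pi_1^t(X)$ surjective. You then check that $\rho\circ f_*$ stays semisimple and geometric, apply Theorem~\ref{main1} with $D=0$, and recover $\rho$ from $\rho\circ f_*$ by surjectivity. The uniform single-curve surjectivity you flag as the main obstacle is exactly what the paper cites (Drinfeld's Proposition~C.1 together with Esnault--Kindler's extension), so your fallbacks are not needed. The one via Frobenius traces at bounded-degree points would in any case have no a priori justification for mod $p$ coefficients.
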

		\begin{Rem}
			\begin{enumerate}
				\item When $D=0$, $\pi_1(X,0)$ is naturally identified with the tame fundamental group of $X$ with respect to the chosen compactification. In what follows we also write
				$
				\pi_1^t(X)
				$
				for this group.
			\item Although $\pi_1^t(X)$ is known to be topologically finitely generated, this only gives the corresponding finiteness statement when the coefficient field is replaced by a fixed finite field. The point of Theorem~\ref{main2} is that the coefficient field $F$ is algebraically closed, hence infinite. Thus finite generation alone gives no uniform control on the finite field of definition of the representations.
			\end{enumerate}
	\end{Rem}

The third case handles representations that admit a lift to characteristic zero.

	\begin{Thm}\label{main3}
	Let $p$ be a prime number, and let $k$ be a finite field of characteristic different from $p$. Let $X$ be a normal geometrically connected variety over $k$, let $\overline X$ be a compactification of $X$, and put
	$
	Z=\overline X\setminus X.
	$
	Let $D\in \Div_Z^+(\overline X)$. Suppose that $F$ is an algebraically closed field of characteristic $p$ which is endowed with the discrete topology. Then, for every positive integer $n$, there are only finitely many isomorphism classes of continuous semisimple geometric representations
	$$
	\rho:\pi_1(X,D)\longrightarrow \GL_n(F),
	$$
	that admit a lift to characteristic zero.
	\end{Thm}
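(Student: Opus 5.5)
Throughout, "a lift to characteristic zero" is taken to mean: a continuous representation $\tilde\rho:\pi_1(X,D)\to\GL_n(\mathcal O)$, with $\mathcal O$ the ring of integers of a finite extension of $W(\Fpbar)[1/p]$ (or of $\overline{\mathbb Q}_p$) and residue field inside $F$, whose reduction has semisimplification $\rho$.

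The plan is to reduce to the characteristic zero finiteness \cite[Corollary 3.6]{MR3622140}, with Jordan's theorem supplying the uniform control that fails in characteristic $p$. Fix a semisimple geometric $\rho$ with a lift $\tilde\rho$. The image $\rho(\pi_1(X,D))$ is finite, because $\rho$ is continuous for the discrete topology and the group is profinite. The first step is to arrange that $\tilde\rho$ has finite image, factoring through the same kind of finite quotient of $\pi_1(X,D)$. Here I would use that $\tilde\rho$ is a lisse $\overline{\mathbb Q}_\ell$-sheaf with $\ell=p\neq \operatorname{char} k$ and ramification bounded by $D$. After twisting by a character of $\pi_1(\Spec k)$, geometric irreducible constituents have finite determinant, and by Deligne's theory they are pure. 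By Lafforgue's theorem (for curves, and via Drinfeld/Deligne for varieties) each constituent is compatible with companions. Passing to a $\mathbb C$-embedding, one would like to replace $\tilde\rho$ by the semisimple lift of its residual image. I expect this to be the main obstacle: $\tilde\rho$ need not have finite image, so it is not obvious how to control $\rho$ from it.

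A cleaner route avoids this difficulty and only needs a finite-image lift. The residual image $G=\rho(\pi_1(X,D))$ is a finite group with a faithful characteristic-zero representation, namely the restriction of $\tilde\rho$ to the finite group. To make this work I would restrict to lifts whose image is finite; the lifts produced in characteristic zero from finite groups can be chosen this way, by taking the Brauer lift of the projective cover (Green's theorem) of the semisimple module. Jordan's theorem then gives an abelian normal subgroup $A\subset G$ of index at most $J(n)$. Smallness of $\pi_1(X,D)$ \cite[Theorem 3.4]{MR3622140} shows that there are only finitely many open subgroups $H$ of index at most $J(n)$, hence finitely many candidate kernels of $\pi_1(X,D)\to G/A$. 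Fix such an $H$, corresponding to a cover $X_H$. The restriction $\rho|_H$ has abelian, hence solvable, image. Geometricity passes to $H$ after a bounded constant field extension: the constant field of $X_H$ has degree at most $J(n)$, and I would replace $k$ by this extension.

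By Clifford theory, $\rho|_H$ is semisimple. Its constituents are characters of $\pi_1(X_H,D_H)$, where $D_H$ is a bounded pullback divisor, which I take from Hiranouchi's functoriality for covers. Apply the solvable-image case of \cite[Corollary 3.6]{MR3622140} to $X_H$, including the geometric twist normalisation. This yields finitely many possibilities for $\rho|_H$ up to twists by characters of $\pi_1(\Spec k)$. Geometricity of $\rho$ bounds the order of any such twist, using the index bound again. Finally, $\rho$ is a constituent of $\operatorname{Ind}_H^{\pi_1(X,D)}$ of a constituent of $\rho|_H$, by Frobenius reciprocity. That induced representation has bounded dimension and finitely many semisimple constituents, so there are finitely many $\rho$.

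In this second route the lift is used only to transport Jordan's bound to $G$. The key point to verify carefully is that bounded-index descent preserves both geometricity and the ramification bound $D$.
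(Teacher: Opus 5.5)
Your second route has a genuine gap at the Jordan step.

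\textbf{Where it breaks.} You assert that $G=\rho(\pi_1(X,D))$ has a faithful characteristic-zero representation, ``namely the restriction of $\tilde\rho$ to the finite group''. But $G$ is a quotient of $\pi_1(X,D)$, not a subgroup of it. In general $\tilde\rho$ does not factor through $G$: its kernel can be much smaller than $\Ker(\rho)$, and its image is typically an infinite compact $p$-adic group of which $G$ is only the reduction. For instance, a lift with image $\mathrm{SL}_2(\mathbb Z_p)$ reduces to $\mathrm{SL}_2(\mathbb F_p)$. For $p\ge 7$ this group has no faithful $2$-dimensional characteristic-zero representation, as the paper's last section recalls. On curves, moreover, every irreducible $\rho$ lifts, by de Jong's conjecture. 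So Jordan's theorem does not apply to $G$. There is no a priori bound $J(n)$, and this is exactly the uniformity the introduction says is lost in characteristic $p$.

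Your repair does not work either. Restricting to finite-image lifts changes the hypothesis of the theorem, which allows any continuous lift to $\GL_n(\mathcal O_E)$. Also, the projective cover $P(S)$ of a simple $\overline{\mathbb F}_p[G]$-module $S$ does lift to an $\mathcal O[G]$-lattice, but $\dim P(S)$ is divisible by $|G|_p$ and exceeds $\dim S$ unless $S$ is projective. Its lift is therefore not an $n$-dimensional lift of $\rho$. The downstream steps (finitely many $H$, Clifford theory, the solvable case of Hiranouchi's Corollary~3.6, Frobenius reciprocity) are reasonable, but all of them rest on this unavailable index bound.

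\textbf{The missing idea.} The route you abandoned is the right one, and finiteness of the image of $\tilde\rho$ is never needed. The paper argues as follows.
\begin{enumerate}
\item Deligne's finiteness theorem applies to arbitrary continuous $\overline{\mathbb Q}_p$-representations of $\pi_1(X,D)$. The irreducible ones of rank at most $n$ form finitely many classes up to twist by characters of $\pi_1(\Spec k)$.
\item Apply this to the irreducible constituents $V_i$ of the semisimplification $V$ of $\tilde\rho\otimes\overline{\mathbb Q}_p$. By Brauer--Nesbitt, $\rho$ is the semisimplified reduction of any stable lattice in $V$. This answers your worry about controlling $\rho$ from an infinite-image $\tilde\rho$.
\item Hence every irreducible constituent of $\rho$ has the form $\tau\otimes\bar\chi$, with $\tau$ in a fixed finite set and $\bar\chi$ a character of $\pi_1(\Spec k)$.
\item Geometricity pins down the twist. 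Constituents of a geometric $\rho$ are geometric, so $\det(\tau)\,\bar\chi^{d}$ is a geometric character of $\pi_1(X,D)$. There are only finitely many such characters, because the geometric part of $\pi_1(X,D)^{\mathrm{ab}}$ is finite (Hiranouchi's Lemma~3.7).
\item Since $\bar\chi$ is determined by its value on Frobenius, finitely many values of $\bar\chi^{d}$ allow only finitely many $\bar\chi$.
\end{enumerate}
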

	
		Let us briefly explain the idea of the proof. The curve case follows from a finiteness theorem \cite[Theorem 1.3]{afinite} for continuous semisimple geometric mod $p$ Galois representations over global function fields with bounded Artin conductor. For the tame case in higher dimension, we use a tame Lefschetz theorem proved by Drinfeld, Esnault and Kindler. In the form needed here, it implies that for a suitable smooth curve $C$ mapping to $X$, the induced homomorphism on tame fundamental groups
	$$
	\pi_1^t(C)\longrightarrow \pi_1^t(X)
	$$
	is surjective. Therefore it reduces to the curve case. Moreover, the proof of Theorem \ref{main3} combines Deligne's finiteness theorem
	with Hiranouchi's finiteness result for the abelian geometric quotient.
	
	The paper is organized as follows. In Section 2, we prove the curve case. In Section 3, we recall the tame Lefschetz theorem and prove the tame case in arbitrary dimension. Section 4 is devoted to the proof of Theorem \ref{main3}.
	
	 Throughout this paper, $p$ is a prime number, $\mathbb{F}_{p}$ denotes the finite field of order $p$, and $\mathbb{Q}_{p}$ is the field of $p$-adic numbers. We fix algebraic closures $\overline{\mathbb{F}}_{p}$ and $\overline{\mathbb{Q}}_{p}$ of $\mathbb{F}_{p}$ and $\mathbb{Q}_{p}$, respectively.
	 
	\section{The curve case}
	
	In this section we prove Conjecture \ref{conj:main} when $X$ is a curve for an odd prime $p$.
	
	Let $K$ be a global function field of characteristic different from $p$, and let $G_K$ be its absolute Galois group. We use the following finiteness theorem for mod $p$ representations over global function fields.
	
	\begin{Thm}\label{thm:function-field}
		Let $p$ be an odd prime number. Let $K$ be a global function field of characteristic different from $p$, and let $R$ be an effective divisor of $K$. Then, for every positive integer $n$, there are only finitely many isomorphism classes of continuous semisimple representations
		$$
		\rho:G_K\longrightarrow \GL_n(\Fpbar)
		$$
		such that:
		\begin{enumerate}
			\item the Artin conductor of $\rho$ is bounded by $R$;
			\item $\rho$ is geometric, that is, the fixed field of $\Ker(\rho)$ contains no non-trivial constant field extension of $K$.
		\end{enumerate}
	\end{Thm}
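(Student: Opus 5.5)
This theorem is quoted from \cite[Theorem~1.3]{afinite}, so in the paper it will be invoked rather than reproved. If I had to prove it, I would reduce to a finiteness statement for finite fields of definition and then use a Hermite--Minkowski type count.

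\textbf{Reduction to bounded fields of definition.} A semisimple representation $\rho$ into $\GL_n(\Fpbar)$ is determined up to isomorphism by the characteristic polynomials of Frobenius elements, by Brauer--Nesbitt and Chebotarev. Every $\rho$ has finite image, so it is defined over some finite field $\mathbb F_{p^r}$. For each fixed $r$ there are only finitely many such representations with conductor bounded by $R$. Indeed, the fixed field of $\Ker(\rho)$ is then an extension of $K$ of degree at most $|\GL_n(\mathbb F_{p^r})|$ with ramification bounded in terms of $R$. Hermite--Minkowski for function fields (equivalently, smallness of $\pi_1(X,D)$) leaves finitely many such fields up to constant extensions, and geometricity excludes the unbounded constant extensions. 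So the whole task is to bound $r$ uniformly, that is, to bound the field generated by the traces of Frobenius (or by all coefficients of the characteristic polynomials) independently of $\rho$.

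\textbf{Bounding the traces.} The strategy is to pass to a geometric input via the function field Langlands correspondence (Lafforgue) and modularity lifting. The residually geometric, bounded-conductor $\rho$ should be liftable to characteristic-zero $\ell$-adic (here $p$-adic) representations of $G_K$ with bounded conductor. By Lafforgue these correspond to cuspidal automorphic representations of $\GL_m(\mathbb A_K)$, $m\le n$, with bounded level. After twisting away the central character, which is possible because the determinant is geometric with bounded conductor, there are only finitely many such representations up to twist. This follows from the finiteness of cusp forms of bounded level with fixed central character, a result of Harder and Lafforgue.

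\textbf{Main obstacle: existence of the lift.} The hardest step is producing this lift for an arbitrary semisimple $\rho$ with non-solvable image. I would first reduce to $\rho$ absolutely irreducible, by induction on $n$ using Jordan--Hölder pieces. I would then apply a Khare--Wintenberger / Böckle style deformation argument over function fields: compute the dimension of the deformation ring with fixed determinant and bounded ramification type via the Euler characteristic formula, then obtain finiteness over $\mathbb Z_p$ from Lafforgue's results. A positive dimension then gives a $\overline{\mathbb Q}_p$-point, hence a characteristic-zero lift. The hypothesis that $p$ is odd would enter here, in the local and global deformation-theoretic computations, through the vanishing of obstructions and the use of pseudocharacters. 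Combining the resulting finiteness of automorphic lifts with reduction mod $p$ and Brauer--Nesbitt then gives finitely many $\rho$.
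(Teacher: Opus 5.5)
You are right that the paper does not prove this statement itself: its proof is only the citation \cite[Theorem~1.3]{afinite}. The outline of your sketch is plausible. You reduce to absolutely irreducible constituents, lift to characteristic zero with bounded conductor, apply Lafforgue together with finiteness of cusp forms of bounded level, and then control twists because there are only finitely many geometric characters of bounded conductor. That last step is exactly what the paper does in the proof of Theorem~\ref{main3}. However, the step that carries all the weight, the existence of a characteristic-zero lift, rests on an input that does not give it.

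Over a global function field the global Euler characteristic of $\mathrm{ad}^0\bar\rho$ is zero. So the obstruction count for the fixed-determinant deformation ring gives only $\dim R^{\det}\ge 1$ (for $p\nmid n$), i.e.\ relative dimension $\ge 0$ over $W(\mathbb F)$. A dimension bound yields a $\overline{\mathbb Q}_p$-point only once you know $R^{\det}$ is finite over $W(\mathbb F)$. Otherwise $R^{\det}$ could be $p$-torsion, like $\mathbb F[[x]]$, and then no lift exists.

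Lafforgue's correspondence does not by itself supply this finiteness. It concerns characteristic-zero points, whose existence is exactly what is in question, and finiteness of those points says nothing about $R^{\det}/p$. Combined with Taylor--Wiles-type arguments (B\"ockle--Khare), it gives finiteness over $W$ only under additional big-image-type hypotheses. Those hypotheses are not available for arbitrary semisimple $\rho$ with non-solvable image.

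The general input is de Jong's conjecture: every deformation of $\bar\rho$ to $\mathbb F[[t]]$ has finite geometric monodromy. This is what forces $R^{\det}/p$ to be finite, and hence $R^{\det}$ to be finite flat over $W$. It was established through work of B\"ockle--Khare and Gaitsgory, and Gaitsgory's proof requires $\ell>2$. This is the ingredient the paper points to in its remark in Section~4.2. It is also where the hypothesis that $p$ is odd is actually used, rather than in local obstruction computations or pseudocharacters; once you reduce to the absolutely irreducible case you do not need pseudocharacters at all.
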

	\begin{proof}
		It is \cite[Theorem 1.3]{afinite}.
	\end{proof}

	We now translate bounded ramification for curves into bounded Artin conductor.
	
	\begin{Lem}\label{lem:bounded-to-artin}
		Let $X$ be a smooth geometrically connected curve over $k$, let $\overline X$ be a smooth compactification of $X$, and let $Z=\overline X\setminus X$. Let 
		$$
		D=\sum_{z\in Z} m_z[z]\in \Div_Z^+(\overline X).
		$$
		Fix a positive integer $n$. There exists an effective divisor $R_D$ on $\overline X$, depending only on $D$ and $n$, such that for every continuous representation
		$$
		\rho:\pi_1(X,D)\longrightarrow \GL_n(\Fpbar),
		$$
		the corresponding representation of the absolute Galois group $G_{k(X)}$ of the function field of $X$ has Artin conductor bounded by $R_D$.
	\end{Lem}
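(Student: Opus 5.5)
The plan is to work locally at each point $z\in Z$, since the Artin conductor is a sum of local terms. Away from $Z$ the representation is unramified, because it factors through $\pi_1(X)$, so the local conductor vanishes there. It therefore suffices to bound, for each $z\in Z$, the local Artin conductor $a_z(\rho)$ by a constant depending only on $m_z$ and $n$. We then take
$$R_D=\sum_{z\in Z} c(m_z,n)[z].$$

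\textbf{Step 1: unwind Hiranouchi's curve definition.} Let $K_z$ be the completion of $k(X)$ at $z$, and $G_{K_z}\to G_{k(X)}$ a decomposition group. The image $\rho(G_{k(X)})$ is finite and cuts out a finite Galois extension $L/k(X)$. By definition of $\pi_1(X,D)$ for curves, this cover has ramification bounded by $D$. Concretely, for each place $w$ of $L$ over $z$, the local extension $L_w/K_z$ satisfies the bound from \cite[Definition~2.4]{MR3622140}. This bound is formulated via the upper ramification filtration: $G_{K_z}^{r}$ acts trivially on $L_w$ for $r\ge m_z$, or for $r>m_z-1$ depending on the normalization used there. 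Hence $\rho|_{G_{K_z}}$ kills $G_{K_z}^{m_z+\epsilon}$ for all $\epsilon>0$, or a fixed shift of this; I will check the exact convention against Hiranouchi's definition.

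\textbf{Step 2: bound the Swan and tame parts.} Write $a_z(\rho)=\dim V-\dim V^{I_z}+\mathrm{Sw}_z(V)$. The first part is at most $n$. Since $\operatorname{char}F=p\neq\operatorname{char}k$, the Swan conductor is defined by the usual formula with upper numbering. The breaks of $V$ are then determined by the upper filtration, as for $\ell$-adic sheaves: the wild inertia $P_z$ has pro-order prime to $p$, so $V$ decomposes into isotypic pieces under $P_z$, with slope decomposition $V=\bigoplus_r V_r$ where $G^{r+}$ acts trivially on $V_r$. Step 1 shows all slopes are $\le m_z$, up to the normalization shift. Hence $\mathrm{Sw}_z(V)=\sum_r r\dim V_r\le n m_z$, and
$$a_z(\rho)\le n(m_z+1)=:c(m_z,n).$$

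\textbf{Main obstacle.} The difficulty is matching Hiranouchi's covering-theoretic bound with the upper-numbering slope condition. His bound is phrased for finite extensions of complete discrete valuation fields, presumably through a conductor or an Abbes–Saito type filtration in the possibly imperfect-residue setting. Here the residue field is finite, hence perfect, so the Abbes–Saito filtration agrees with the classical upper filtration up to a shift by one. I would first recall this comparison, then show that a Galois extension with ramification bounded by $m_z$ has $G^{m_z}$ (or $G^{m_z+}$) acting trivially. After that, the rest is the linear-algebra bound above, which is uniform in $\rho$ since it depends only on $n$ and $D$.
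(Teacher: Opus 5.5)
Your proposal is correct and follows the paper's proof. Both argue locally at each $z\in Z$: the bounded-ramification condition (Hiranouchi's Definition~2.2) says that $G^{m_z+}_{k(X)_z}$ acts trivially, so the Swan conductor is bounded in terms of $m_z$ and $n$, the tame part contributes at most $n$, and points of $X$ contribute nothing. Your break-decomposition argument, which uses that wild inertia is pro-$(\operatorname{char}k)$ and hence of order prime to $p$, gives the explicit bound $n m_z$ where the paper cites Hiranouchi's Lemma~2.18. The normalization shift you worry about is harmless, since any fixed shift still gives a local bound depending only on $m_z$ and $n$.
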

	
	\begin{proof}
		Let $\rho$ be such a representation. Since $\GL_n(\Fpbar)$ is discrete and $\pi_1(X,D)$ is profinite, the image of $\rho$ is finite. Hence the local ramification groups act through finite quotients.
		
		For every point $z\in Z$, by \cite[Def. 2.2]{MR3622140}, the condition that $\rho$ factors through $\pi_1(X,D)$ means that the upper ramification subgroup $G_{k(X)_z}^{m_z+}$ acts trivially. Since the dimension of $\rho$ is equal to $n$, the Swan conductor at $z$ is bounded by a constant depending only on $m_z$ and $n$, cf. \cite[Lem. 2.18]{MR3622140}. Moreover the tame part contributes at most $n$ to the Artin conductor. Therefore the local Artin conductor at $z$ is bounded by a constant depending only on $m_z$ and $n$.
		
		At points of $X$, the representation is unramified. Thus the Artin conductor is bounded by an effective divisor supported on $Z$, depending only on $D$ and $n$. This proves the assertion.
	\end{proof}
	
	Now we prove Theorem \ref{main1} as follows:

	\begin{proof}[Proof of Theorem \ref{main1}]
		Since $\pi_1(X,D)$ is profinite and $\GL_n(F)$ is discrete, every continuous representation has finite image. For a finite group $G$, every semisimple representation of $G$ over an algebraically closed field of characteristic $p$ is defined over $\overline{\mathbb F}_p$. Therefore, after replacing a representation by an isomorphic one, we may assume that its image lies in $\GL_n(\overline{\mathbb F}_p)$. Thus it suffices to prove the theorem for $F=\overline{\mathbb F}_p$. 
		
	As in the proof of \cite[Theorem 3.4]{MR3622140}, after replacing the compactification $\overline{X}$ with its normalization, and invoking the corresponding surjection on fundamental groups with bounded ramification, we may assume that $\overline X$ is a normal curve. Since $k$ is perfect, every normal curve over $k$ is regular, hence smooth over $k$. In particular, both $X$ and $\overline{X}$ are smooth. Let
		$
		K=k(X).
		$
		Every continuous representation
		$$
		\rho:\pi_1(X,D)\longrightarrow \GL_n(\Fpbar)
		$$
		may be viewed as a representation of $G_K$ which is unramified on $X$ and whose ramification at the points of $Z$ is bounded by $D$. By Lemma \ref{lem:bounded-to-artin}, its Artin conductor is bounded by an effective divisor $R_D$ depending only on $D$ and $n$.
		
		Moreover, the geometric condition for $\rho$ as a representation of $\pi_1(X,D)$ is equivalent to the condition that the fixed field of $\Ker(\rho)$ contains no non-trivial constant field extension of $K$. Therefore Theorem \ref{thm:function-field} applies and gives the desired finiteness.
	\end{proof}
	
	\section{The tame case in arbitrary dimension}
	
	In this section we prove Conjecture \ref{conj:main} in the tame case $D=0$ for an odd prime $p$.
    \subsection{Proof of Theorem \ref{main2}}
	We first recall the Lefschetz theorem for tame coverings that we need. The following formulation is a consequence of Drinfeld's theorem and its extension by Esnault and Kindler.

\begin{Thm}\label{thm:tame-lefschetz}
	Let $X$ be a smooth geometrically connected variety
	over a finite field $k$, and let
	$
	X\hookrightarrow \overline X
	$
	be a projective normal geometrically connected compactification. Then there
	exists a smooth geometrically connected curve $C$ over $k$ and a morphism
	$$
	C\longrightarrow X
	$$
	such that the induced homomorphism on tame fundamental groups
	$$
	\pi_1^t(C)\longrightarrow \pi_1^t(X)
	$$
	is surjective.
\end{Thm}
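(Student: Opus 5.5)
The plan is to derive the statement from the Lefschetz theorem for tame coverings of Drinfeld and of Esnault--Kindler (Drinfeld's theorem on curves cutting out a given finite quotient, as extended to the tame setting by Esnault and Kindler). The key input is that $\pi_1^t(X)$ is topologically finitely generated, and the idea is to reduce surjectivity to a single finite quotient. Concretely, I would pick finitely many topological generators $\gamma_1,\dots,\gamma_r$ of $\pi_1^t(X)$. Then I would choose an open normal subgroup $H\subset \pi_1^t(X)$ with the following property: if a closed subgroup $\Gamma\subset \pi_1^t(X)$ surjects onto $\pi_1^t(X)/H$, then $\Gamma=\pi_1^t(X)$. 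Such an $H$ exists by a standard argument via the Frattini subgroup. The Frattini subgroup $\Phi$ of a topologically finitely generated profinite group is open, since only finitely many open subgroups of each index exist. Every closed subgroup mapping onto $\pi_1^t(X)/\Phi$ is then the whole group. So I would take $H=\Phi\bigl(\pi_1^t(X)\bigr)$, or any open normal subgroup contained in it.

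Next I would let $Y\to X$ be the finite tamely ramified Galois cover with group $G=\pi_1^t(X)/H$. I would apply the Esnault--Kindler form of Drinfeld's theorem to this cover. Drinfeld's original version covers the étale case of a smooth variety over a finite field. The tame version says: given a tame Galois cover $Y\to X$ with $X$ smooth and geometrically connected and a projective normal compactification $\overline X$, there is a smooth geometrically connected curve $C$ with a morphism $C\to X$ such that $C\times_X Y$ is connected. Here tameness of $C$ is measured along its own compactification mapping to $\overline X$. Connectedness of $C\times_X Y$ means exactly that the composite $\pi_1^t(C)\to\pi_1^t(X)\to G$ is surjective. By the choice of $H$, the image of $\pi_1^t(C)$, which is closed because it is compact, is all of $\pi_1^t(X)$.

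A few compatibilities need checking. First, the morphism $C\to X$ must induce a map on tame fundamental groups. This holds because tame covers of $X$, tame with respect to $\overline X$, pull back to tame covers of $C$, tame with respect to its smooth compactification $\overline C$; this follows from the curve-tameness characterization of Kerz--Schmidt, which is equivalent to $D=0$ in Hiranouchi's definition. Second, $C$ must be geometrically connected. The theorem delivers this, and in any case it can be arranged because $\overline Y$ can be chosen geometrically connected after dividing by the constant part. The main obstacle is the precise invocation of the Esnault--Kindler extension: it requires the correct hypotheses, in particular the normal projective compactification and that the cover be tame rather than merely étale. If their statement only produces a curve for which a given finite tame quotient survives, the Frattini reduction above is exactly what upgrades it to surjectivity onto the full $\pi_1^t(X)$.
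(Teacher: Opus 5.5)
There is a genuine gap in the Frattini step, and your whole reduction rests on it.

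The claim that the Frattini subgroup of a topologically finitely generated profinite group is open is false. Having finitely many open subgroups of each index does not bound the indices of the maximal open subgroups, and $\Phi(G)$ is open if and only if $G$ has only finitely many maximal open subgroups. Already $\Phi(\widehat{\mathbb Z})=\prod_\ell \ell\mathbb Z_\ell$ has infinite index.

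Since $G=\pi_1^t(X)$ surjects onto $\pi_1(\Spec k)\cong\widehat{\mathbb Z}$, no open normal $H$ with your property exists. Let $m\widehat{\mathbb Z}$ be the image of $H$, pick a prime $\ell\nmid m$, and let $\Gamma$ be the preimage of $\ell\widehat{\mathbb Z}$. Then $\Gamma$ is a proper open subgroup of prime index with $H\not\subset\Gamma$, hence $\Gamma H=G$.

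Geometric connectedness of $C$ disposes of this particular quotient, but the same thing happens inside $\pi_1^t(X_{\bar k})$. For example, for $X=\mathbb G_m^2$ the geometric tame fundamental group is $\prod_{\ell\neq \mathrm{char}\,k}\mathbb Z_\ell(1)^2$, which has a $\mathbb Z/\ell$ quotient for infinitely many $\ell$. Drinfeld's theorem in the form ``$C\times_X Y$ connected'' produces a curve that depends on the cover $Y$. Such a curve, chosen for one finite tame cover, may have image a proper closed subgroup of $\pi_1^t(X)$. No purely group-theoretic argument can upgrade ``surjective onto one finite quotient'' to ``surjective''.

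What you need is a Lefschetz statement for the whole tame fundamental group, i.e.\ one curve that works for all tame covers simultaneously. This is what the paper invokes. It first removes a closed subset $\Sigma$ of codimension at least two. $\Sigma$ consists of $\operatorname{Sing}(\overline X)$, the boundary components of codimension at least two, and the singular loci and pairwise intersections of the boundary divisors. After this, $\overline X\setminus\Sigma$ is smooth and the boundary there is a smooth divisor. The paper then applies Drinfeld's theorem (Esnault--Kerz, Prop.~C.1) together with its tame extension by Esnault--Kindler (\S6). This gives a smooth projective curve $\overline C\subset\overline X\setminus\Sigma$ with $C=\overline C\cap X$ geometrically connected and $\pi_1^t(C)\to\pi_1^t(X)$ surjective outright.

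Your remaining checks (functoriality of $\pi_1^t$ under $C\to X$, closedness of the image) are fine. However, you must cite the full-surjectivity form of the theorem; the Frattini reduction cannot substitute for it.
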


\begin{proof}
	If $\dim X=1$, then we can  take $C=X$. Thus we may assume that $\dim X\geq 2$. Let
	$
	Z:=\overline X\setminus X
	$
	with its reduced closed subscheme structure. Write
	$$
	Z^{(1)}=\bigcup_i Z_i
	$$
	for the union of the irreducible components of $Z$ of codimension one in
	$\overline X$, and let $Z^{(\geq 2)}$ be the union of the remaining irreducible
	components. Define
	$$
	\Sigma
	=
	\operatorname{Sing}(\overline X)
	\cup Z^{(\geq 2)}
	\cup \bigcup_i \operatorname{Sing}(Z_i)
	\cup \bigcup_{i\neq j}(Z_i\cap Z_j).
	$$
	Here each $Z_i$ is endowed with its reduced induced scheme structure, and $\operatorname{Sing}(Y)$ denotes the singular locus of a scheme $Y$, endowed with the reduced closed subscheme structure. Since
	$\overline X$ is normal, its singular locus $\operatorname{Sing}(\overline X)$ has codimension at
	least two in $\overline X$. Moreover, each $\operatorname{Sing}(Z_{i})$ and the intersections $Z_i\cap Z_j$ have codimension at least
	two in $\overline X$. Hence $\Sigma$ has codimension at least two in
	$\overline X$. By construction, $\overline X\setminus\Sigma$ is smooth. Moreover,
	$$
	(\overline X\setminus\Sigma)\cap(\overline X\setminus X)
	=Z\setminus \Sigma=
	\bigsqcup_i (Z_i\setminus\Sigma)
	$$
	is a disjoint union of smooth schemes, and hence is smooth.

	By Drinfeld's theorem \cite[Proposition C.1]{MR3024821} and \cite[Section 6]{MR3556253}, there exists a smooth projective curve
	$$
	\overline C\subset \overline X\setminus\Sigma,
	$$
	 such that
	$$
	C:=\overline C\cap X
	$$
   is geometrically connected and the induced homomorphism on tame fundamental groups
	$$
	\pi_1^t(C)\longrightarrow \pi_1^t(X)
	$$
	is surjective.
\end{proof}

	\begin{Lem}\label{lem:restriction-geometric}
		Let
		$$
		f:C\longrightarrow X
		$$
		be a morphism of geometrically connected varieties over $k$ such that the induced homomorphism
		$$
		f_*:\pi_1(C,0)\longrightarrow \pi_1(X,0)
		$$
		is surjective. If
		$$
		\rho:\pi_1(X,0)\longrightarrow \GL_n(F)
		$$
		is a continuous geometric representation, then
		$$
		\rho\circ f_*:\pi_1(C,0)\longrightarrow \GL_n(F)
		$$
		is also geometric.
	\end{Lem}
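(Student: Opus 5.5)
The plan is to compare the two short exact sequences
$$
1\to \pi_1(C,0)^0\to \pi_1(C,0)\to \pi_1(\Spec k)\to 1,\qquad 1\to \pi_1(X,0)^0\to \pi_1(X,0)\to \pi_1(\Spec k)\to 1,
$$
which are linked by $f_*$. Surjectivity of the right-hand maps comes from geometric connectedness of $C$ and $X$. Since $f$ is a morphism of $k$-varieties, the structure maps satisfy $\mathrm{str}_X\circ f=\mathrm{str}_C$. By functoriality of fundamental groups (with compatible base points), $f_*$ therefore commutes with the projections to $\pi_1(\Spec k)$. The first consequence I would record is $f_*(\pi_1(C,0)^0)\subseteq \pi_1(X,0)^0$.

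The key step is to upgrade this inclusion to an equality, using surjectivity of $f_*$. Take $g\in\pi_1(X,0)^0$ and choose any preimage $h\in\pi_1(C,0)$ with $f_*(h)=g$. The image of $h$ in $\pi_1(\Spec k)$ equals the image of $g$, which is trivial. Hence $h\in\pi_1(C,0)^0$, and so
$$
f_*\bigl(\pi_1(C,0)^0\bigr)=\pi_1(X,0)^0 .
$$

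With this equality the conclusion is a direct computation:
$$
(\rho\circ f_*)(\pi_1(C,0)) = \rho(\pi_1(X,0)) = \rho(\pi_1(X,0)^0) = \rho\bigl(f_*(\pi_1(C,0)^0)\bigr) = (\rho\circ f_*)(\pi_1(C,0)^0).
$$
The first equality is surjectivity of $f_*$, the second is geometricity of $\rho$, and the third is the equality of kernels just established. Continuity of $\rho\circ f_*$ is automatic, since $f_*$ is continuous.

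The only step needing care is compatibility of $f_*$ with the maps to $\pi_1(\Spec k)$, because the groups involved are the tame quotients. I would handle this by noting that the projection $\pi_1(X)\to\pi_1(\Spec k)$ factors through $\pi_1(X,0)$: constant field extensions are finite étale covers pulled back from $\Spec k$, so they are unramified along $Z$ and in particular tame. The same holds for $C$. Compatibility for $\pi_1(X)$ and $\pi_1(C)$ is plain functoriality, and it then descends to the tame quotients, since $f_*$ on tame groups is induced from $f_*$ on the full étale fundamental groups. Base-point ambiguity changes maps only by inner automorphisms, and these act trivially on the abelian group $\pi_1(\Spec k)$, so it causes no trouble.
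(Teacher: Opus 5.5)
Your proposal is correct and follows the paper's proof: you use the commutative square over $\pi_1(\Spec k)$ to get $f_*(\pi_1(C,0)^0)=\pi_1(X,0)^0$ from the surjectivity of $f_*$, and then conclude with the same chain of equalities of images. Your extra justification that the projection to $\pi_1(\Spec k)$ factors through the tame quotients is something the paper simply takes for granted when it writes down the diagram.
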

	
	\begin{proof}
		We have a commutative diagram
		$$
		\xymatrix{
			\pi_1(C,0) \ar[r]^{f_*} \ar[d] & \pi_1(X,0) \ar[d] \\
			\pi_1(\Spec k) \ar@{=}[r] & \pi_1(\Spec k).
		}
		$$
	
		We claim that
		$$
		f_*(\pi_1(C,0)^0)=\pi_1(X,0)^0.
		$$
		Indeed, the inclusion
		$$
		f_*(\pi_1(C,0)^0)\subseteq \pi_1(X,0)^0
		$$
		follows from the commutativity of the diagram. Conversely, let
		\(g\in \pi_1(X,0)^0\). Since \(f_*\) is surjective, there exists
		\(h\in \pi_1(C,0)\) such that \(f_*(h)=g\). By commutativity, the image of
		\(h\) in \(\pi_1(\Spec k)\) is equal to the image of \(g\), which is trivial.
		Thus \(h\in \pi_1(C,0)^0\), and hence
		\(g\in f_*(\pi_1(C,0)^0)\). This proves the claim.
		
		Therefore
		$$
		(\rho\circ f_*)(\pi_1(C,0)^0)
		=
		\rho(\pi_1(X,0)^0)
		=
		\rho(\pi_1(X,0))
		=
		(\rho\circ f_*)(\pi_1(C,0)),
		$$
		where the middle equality follows from the geometricity of \(\rho\). Hence
		\(\rho\circ f_*\) is geometric.
	\end{proof}
	
Finally, we prove Theorem \ref{main2} as follows:
	\begin{proof}[Proof of Theorem \ref{main2}]
		For any non-empty open subscheme $U\hookrightarrow X$, the homomorphism $\pi_1(U,0)\to \pi_{1}(X,0)$ is surjective using the same argument in the proof of \cite[Theorem 3.4]{MR3622140}. Thus we may assume that $X$ is quasi-projective. 
		
		Following the proof of \cite[Theorem 3.4]{MR3622140}, by replacing $X$ with its smooth locus, replacing the compactification with its normalization, and invoking the corresponding surjection on fundamental groups with bounded ramification, we may assume that $X$ is smooth and $\overline X$ is normal and projective. By the tame Lefschetz theorem \ref{thm:tame-lefschetz}, there exists a smooth geometrically connected curve $C$ over $k$ and a morphism
		$$
		f:C\longrightarrow X
		$$
		such that the induced homomorphism
		$$
		f_*:\pi_1(C,0)\longrightarrow \pi_1(X,0)
		$$
		is surjective.
		
		Let
		$$
		\rho:\pi_1(X,0)\longrightarrow \GL_n(F)
		$$
		be a continuous semisimple geometric representation. Then the pullback
		$$
		\rho_C:=\rho\circ f_*:\pi_1(C,0)\longrightarrow \GL_n(F)
		$$
		is continuous, semisimple, and geometric by Lemma \ref{lem:restriction-geometric}.
		
		By Theorem \ref{main1}, applied to the curve $C$ with $D=0$, there are only finitely many isomorphism classes of possible representations $\rho_C$. Since $f_*$ is surjective, the representation $\rho$ is uniquely determined by $\rho_C$. Hence there are only finitely many isomorphism classes of such representations $\rho$.
	\end{proof}
	
	\begin{Rem}
		The results above leave open the full bounded-ramification case in higher dimensions. The main obstacle is that, unlike the tame case, there is currently no Lefschetz theorem for Hiranouchi's fundamental group $\pi_1(X,D)$ with arbitrary $D$ that is strong enough to reduce the problem to curves. If such a Lefschetz theorem were available, the curve case proved in Section 2 would imply Conjecture \ref{conj:main} in full generality.
	\end{Rem}
	
  \subsection{Further remarks}
  
  It is natural to ask whether Conjecture~\ref{conj:main} can be reduced to the tame case. For a general effective divisor $D$, however, the quotient $\pi_1(X,D)$ still allows wild ramification, although bounded by $D$. Such a
  reduction would require a much stronger uniform statement. Namely, one would
  need a finite étale cover
  $$
  Y\longrightarrow X
  $$
  depending only on $X,\overline X,D$ and $n$, such that for every continuous
  semisimple geometric representation
  $$
  \rho:\pi_1(X,D)\longrightarrow \GL_n(F),
  $$
  the pullback representation $\rho|_{\pi_1(Y)}$ is tame. In view of Hiranouchi's smallness theorem \cite[Theorem 3.4]{MR3622140}, it would be enough to prove the
  following weaker uniform statement.
  
  \begin{Conj}\label{conj:bounded-individual}
  	Fix $X,\overline X,D$ and $n$. There exist a constant $M$ and an effective
  	Cartier divisor $D'$ supported on $\overline X\setminus X$, depending only on
  	$X,\overline X,D$ and $n$, such that for every continuous semisimple geometric
  	representation
  	$$
  	\rho:\pi_1(X,D)\longrightarrow \GL_n(\overline{\mathbb F}_p),
  	$$
  	there exists a finite étale cover
  	$$
  	Y_\rho\longrightarrow X
  	$$
  	satisfying the following properties:
  	\begin{enumerate}
  		\item $\deg(Y_\rho/X)\le M$;
  		\item the ramification of $Y_\rho\to X$ is bounded by $D'$;
  		\item the pullback representation
  		$$
  		\rho|_{\pi_1(Y_\rho)}
  		$$
  		is tame.
  	\end{enumerate}
  \end{Conj}
  
  Indeed, by Hiranouchi's smallness theorem, there are only finitely many
  possibilities for the covers $Y_\rho\to X$ satisfying the first two conditions.
  Taking a common finite étale cover dominating all of them would then give a
  single cover $Y\to X$, depending only on $X,\overline X,D$ and $n$, which
  makes every such representation tame after pullback.
	
	\section{Representations liftable to characteristic zero}
	
	In this section we prove a finiteness result for mod $p$ representations which admit a lift to characteristic zero.
	
	\subsection{Proof of Theorem \ref{main3}} We first make precise what we mean by such a lift.

   \begin{Def}
   	Let $F$ be an algebraically closed field of characteristic $p$ equipped
   	with the discrete topology. We say that a continuous representation
   	\[
   	\rho:\pi_1(X,D)\longrightarrow \GL_n(F)
   	\]
   	admits a lift to characteristic zero if there exist a finite extension
   	$E/\mathbb Q_p$, with ring of integers $\mathcal O_E$ and residue field
   	$\kappa_E$, an embedding $\kappa_E\hookrightarrow F$, and a continuous
   	representation
   	\[
   	\widetilde\rho:\pi_1(X,D)\longrightarrow \GL_n(\mathcal O_E)
   	\]
   	such that, after extension of scalars through $\kappa_E\hookrightarrow F$,
   	the reduction of $\widetilde\rho$ modulo the maximal ideal of $\mathcal O_E$ is isomorphic to $\rho$. Here the group $\GL_{n}(\mathcal{O}_{E})$ is equipped with the $p$-adic topology induced from the topological ring $\mathcal{O}_{E}$.
   \end{Def}
	
     \begin{Thm}[Deligne's finiteness theorem]\label{Deligne'sfinitenesstheorem}
     Let $k$ be a finite field of characteristic different from $p$. Let $X$ be a geometrically connected normal scheme of finite type defined over $k$, $X\hookrightarrow \overline{X}$ be a normal compactification, $D$ be an effective Cartier divisor with support in $\overline{X}  \setminus X$, $n$ be a natural number. Then there are only finitely many isomorphism classes of irreducible
     	continuous representations
     	\[
     	\rho:\pi_1(X,D)\to \mathrm{GL}_n(\overline{\mathbb Q}_{p})
     	\]
     up to twist by a character of $\pi_1(\operatorname{Spec}k)$.
     \end{Thm}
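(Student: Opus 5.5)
The plan is to follow Deligne's strategy as written up by Esnault--Kerz. It has three stages: normalize the twist and make the traces algebraic, get finiteness on curves from the Langlands correspondence, and pass to higher dimension using a bound on the degrees of closed points needed to determine a representation.

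\textbf{Normalizing and making traces algebraic.} First, replace $\rho$ by a twist by a character of $\pi_1(\Spec k)$ so that $\det\rho$ has finite order. This is possible because the geometric part of $\det\rho$ has finite order, as the abelianized geometric fundamental group with bounded ramification is controlled by class field theory. The twist is unique up to finitely many choices. By L.~Lafforgue's theorem on curves, extended to higher dimension by Deligne and Drinfeld, such a $\rho$ is then pure of weight $0$. Moreover, its Frobenius traces $\operatorname{tr}\rho(\mathrm{Frob}_x)$ at closed points $x\in X$ are algebraic integers lying in a number field $E_\rho$. The degree of $E_\rho$ over $\mathbb Q$ is bounded in terms of $X,\overline X,D,n$ alone, and all complex absolute values of these traces are bounded by $n$.

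\textbf{The curve case.} For a smooth curve, the bounded-ramification condition translates into a bound on the Artin conductor, exactly as in Lemma~\ref{lem:bounded-to-artin} but now over $\overline{\mathbb Q}_p$. Under the Langlands correspondence, the normalized $\rho$ then correspond to cuspidal automorphic representations of $\GL_n$ over $k(X)$ of bounded level with central character of finite order. Up to twist there are finitely many of these, since the space of cusp forms of fixed level is finite-dimensional modulo the centre. This step also yields a uniform bound on the number field generated by the Hecke eigenvalues.

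\textbf{Higher dimension.} Here the key step is an effective Chebotarev/Lefschetz statement. There is an integer $N$, depending only on $(X,\overline X,D,n)$, such that a semisimple $\rho$ of rank $n$ with ramification bounded by $D$ is determined by its characteristic polynomials at closed points of degree at most $N$. To obtain $N$, compare two such representations $\rho_1,\rho_2$ via the bounded-ramification sheaf $\mathcal H om(\rho_1,\rho_2)$ pulled back to curves. Use the Grothendieck--Lefschetz trace formula together with Deligne's Weil~II to bound the relevant cohomology, with Betti numbers bounded uniformly in terms of $D$ and $n$ (this is where bounded ramification enters, via Euler characteristic bounds on curves). Then apply a Chebotarev-type counting argument. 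Given $N$, there are finitely many closed points of degree at most $N$. At each of them the characteristic polynomial has coefficients that are algebraic integers of bounded degree with bounded absolute values, so it ranges over a finite set. Hence, up to twist, only finitely many $\rho$ occur. The semisimplicity afforded by irreducibility, via Brauer--Nesbitt, makes traces determine $\rho$.

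The main obstacle is the uniform bound $N$ in higher dimension. It needs a uniform Betti number bound for bounded-ramification sheaves on all curves through a given point, and it needs Deligne's skeleton argument to pass from curves to $X$. My first attempt would be to reduce to a fixed family of curves meeting any given finite set of points, obtained by Bertini-type arguments on a projective normal compactification, and to use the curve case uniformly in that family.
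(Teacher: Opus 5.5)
\textbf{There is a genuine gap at the uniform $N$.} Your third stage carries the content of the theorem, and it is not carried out: you name the uniform $N$ as the main obstacle, and the route you sketch for it cannot work. Over the finite field $k$, a bounded family of curves in $X$ (say complete intersections of bounded degree in a fixed projective embedding) has only finitely many members defined over $k$. So for $\dim X\ge 2$ their union is a proper closed subset and misses most closed points. Taking members defined over extensions $k'/k$ does not help: the trace comparison on such a curve runs over its $k'$-points, and these in general have the same degree $[k':k]$ over $k$ as the point you want to control.

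What closes this step is that the complexity of the test curve enters only logarithmically, which makes an induction on $\deg x$ self-improving. For a closed point $x$ of degree $m$, a Bertini argument over $k$ should give a curve $C\to X$ over $k$, smooth at $x$, whose genus, number of points at infinity and $\deg(D|_{\overline C})$ grow at most polynomially in $m$. Weil~II and the Grothendieck--Ogg--Shafarevich formula on $C$ then show the following: two lisse sheaves of rank $n$ on $C$, pointwise pure of weight $0$ and with ramification bounded by $D|_{\overline C}$, have isomorphic semisimplifications as soon as their characteristic polynomials agree at all closed points of $C$ of degree $\le c\log m$. Here $c$ depends only on $X,\overline X,D,n$. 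Since $c\log m<m$ for $m\ge N_0$, agreement at all closed points of degree $\le N_0$ propagates by induction on the degree to all closed points. That $N_0$ is the uniform $N$.

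\textbf{Your first stage contains claims that are false as written, and your final count rests on them.} Normalizing only to ``$\det\rho$ of finite order'' leaves infinitely many admissible twists, namely all finite-order characters of $\Gal(\overline k/k)$. So the twist is not unique up to finitely many choices. Nor is $[E_\rho:\mathbb Q]$ bounded: twisting by $\chi$ with $\chi(\mathrm{Frob})=\zeta_M$ puts $\zeta_M^{\deg x}$ into $E_{\rho\otimes\chi}E_\rho$ for any closed point $x$ with $\operatorname{tr}\rho(\mathrm{Frob}_x)\neq0$. You have to pin $\det\rho$ into a fixed finite set of characters, which is possible because the geometric part of $\pi_1(X,D)^{\mathrm{ab}}$ is finite. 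Even then, a degree bound uniform in $\rho$ needs its own argument; it does not follow from each $E_\rho$ being a number field.

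Moreover, the traces are in general not algebraic integers. They are units away from the characteristic of $k$, but above it there is only Lafforgue's bound on valuations. An example is a half Tate twist of $R^1f_*\overline{\mathbb Q}_p$ for a non-isotrivial family of elliptic curves $f$, at ordinary points. So the counting at the end must allow bounded denominators.

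\textbf{Comparison with the paper.} The paper does not reprove the theorem. It quotes Deligne's result \cite[Theorem~3.1]{MR3687121}, \cite[Theorem~1.1]{MR3058662}. It then uses \cite[Lemma~2.17]{MR3622140} to see that a continuous representation $\pi_1(X,D)\to\GL_n(\overline{\mathbb Q}_p)$ is a lisse sheaf whose ramification is bounded by $D$ in Deligne's sense. That comparison is the only thing the statement needs beyond the citation.
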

	\begin{proof}
	It follows from \cite[Theorem 3.1]{MR3687121} and \cite[Lemma 2.17]{MR3622140}. See also \cite[Theorem 1.1]{MR3058662}.
	\end{proof}
	
	We now prove Theorem \ref{main3} as follows.

	\begin{proof}[Proof of Theorem \ref{main3}]
		As in the proof of Theorem \ref{main1}, we may assume that $F=\overline{\mathbb F}_p$. Furthermore, as in the proof of \cite[Theorem 3.4]{MR3622140}, replacing the compactification
		by its normalization and using the corresponding surjection on fundamental groups
		with bounded ramification, we may also assume that $\overline X$ is normal.
		
	Put $$ G=\pi_1(X,D),\qquad G^0=\ker\bigl(G\to \pi_1(\operatorname{Spec}k)\bigr). $$ We shall use the following consequence of \cite[Lemma 3.7]{MR3622140}. Let $$ G^{\mathrm{ab},0} := \ker\bigl(G^{\mathrm{ab}}\to \pi_1(\operatorname{Spec}k)^{\mathrm{ab}}\bigr). $$ Then $G^{\mathrm{ab},0}$ is finite. It follows that there are only finitely many continuous geometric characters $ G\longrightarrow \overline{\mathbb F}_p^\times $.
	 Let $$ \rho:G\longrightarrow \mathrm{GL}_n(\overline{\mathbb F}_p) $$ be a continuous semisimple geometric representation admitting a lift to characteristic zero. Choose such a lift $$ \widetilde\rho:G\longrightarrow \mathrm{GL}_n(\mathcal O_E). $$ 
	 
	 Let $V$ denote the semisimplification of
	 $\widetilde{\rho}~ \otimes_{\mathcal O_E}\overline{\mathbb Q}_p$. Then $V$ is a semisimple $\overline{\mathbb Q}_p$-representation of $G$. Moreover, by the Brauer--Nesbitt theorem, the semisimplified reduction modulo $p$ of any $G$-stable lattice in $V$ is isomorphic to $\rho$, since $\rho$ is already semisimple. Write $$ V=\bigoplus_i V_i $$ as a direct sum of irreducible $\overline{\mathbb Q}_p$-representations. By Theorem \ref{Deligne'sfinitenesstheorem}, there exists a finite set $\mathcal S$ of irreducible $\overline{\mathbb Q}_p$-representations of $G$, of dimensions at most $n$, such that every $V_i$ is of the form $$ W\otimes \chi $$ with $W\in\mathcal S$ and with $\chi$ a continuous character of $\pi_1(\operatorname{Spec}k)$. For each $W\in\mathcal S$, choose a $G$-stable lattice and let $ \overline W $ denote the semisimplified reduction modulo $p$. Since $\mathcal S$ is finite, only finitely many irreducible mod $p$ representations occur as irreducible constituents of the various $\overline W$. Let $\tau$ be one of these finitely many irreducible constituents, say of dimension $d$. We claim that only finitely many twists $$ \tau\otimes \overline\chi $$ can occur as an irreducible constituent of a geometric representation $\rho$, where $\overline\chi$ is the mod $p$ reduction of a character of $\pi_1(\operatorname{Spec}k)$. Indeed, since $\rho$ is geometric, every irreducible constituent of $\rho$ is geometric. Therefore, if $\tau\otimes\overline\chi$ occurs in $\rho$, then $\tau\otimes\overline\chi$ is geometric. Hence its determinant $$ \det(\tau\otimes\overline\chi) = \det(\tau)\,\overline\chi^{d} $$ is a geometric character of $G$. By the finiteness of geometric characters proved above, the character $$ \det(\tau)\,\overline\chi^{d} $$ has only finitely many possibilities. Since $\det(\tau)$ is fixed, the character $\overline\chi^{d}$ has only finitely many possibilities. A character of $\pi_1(\operatorname{Spec}k)$ with values in $\overline{\mathbb F}_p^\times$ is determined by the image of a Frobenius element, and for each fixed value of $\overline\chi^{d}$ there are only finitely many possible values of $\overline\chi$. Therefore only finitely many twists $\tau\otimes\overline\chi$ can occur. It follows that there are only finitely many possible irreducible constituents of $\rho$. Since $\rho$ is semisimple of dimension $n$, it is a direct sum of at most $n$ irreducible constituents. Hence there are only finitely many isomorphism classes of such representations $\rho$. This proves the theorem.
	\end{proof}

 \subsection{A remark}
	Theorem~\ref{main3} naturally leads to the following question.
	
	\begin{Question}\label{question:lifting}
		Let $p$ be a prime number, and let $k$ be a finite field of characteristic different
		from $p$. Let $X$ be a normal geometrically connected variety over $k$, let
		$X\hookrightarrow \overline X$ be a compactification, and let
		$
		D\in \Div^+_{\overline X\setminus X}(\overline X).
		$
		Is every continuous irreducible geometric representation
		$$
		\rho:\pi_1(X,D)\longrightarrow \GL_n(\overline{\mathbb{F}}_{p}),
		$$
	 liftable to characteristic zero?
	\end{Question}
	
	For curves, the answer is positive; this is covered by a conjecture of de Jong and its solution established by the work of Böckle--Khare and Gaitsgory, see \cite{afinite}. In higher dimension, however, such a lifting statement is false in general. We briefly
	explain a source of counterexamples. Let $q$ be a power of $p$ with $q\geq 7$,
	and let $
	G=\mathrm{SL}_2(\mathbb F_q)$
	be the two dimensional special linear group over the finite field of order $q$. The natural representation
	$$
	\iota:G\hookrightarrow \GL_2(\overline{\mathbb F}_p)
	$$
	does not lift to a two-dimensional representation of $G$ over a field of
	characteristic zero. Indeed, if such a lift existed, then $G$ would embed into
	$\GL_2(\mathbb C)$. Then its quotient $\mathrm{PSL}_2(\mathbb F_q)$ would be
	a finite subgroup of $\mathrm{PGL}_2(\mathbb C)$. But the finite subgroups of
	$\mathrm{PGL}_2(\mathbb C)$ are cyclic, dihedral, $A_4$, $S_4$, and $A_5$, whereas
	$\mathrm{PSL}_2(\mathbb F_q)$ is not of this form for $q\geq 7$.
	
	Now take the split extension
	$$
	1\longrightarrow G\longrightarrow G\times \Gal(\overline k/k)
	\longrightarrow \Gal(\overline k/k)\longrightarrow 1.
	$$
	By \cite[Theorem A]{MR3806728}, for every integer $r\geq 2$ there exists a smooth projective
	geometrically connected variety $X$ over $k$ of dimension $r$ such that
	$$
	\pi_1(X)\simeq G\times \Gal(\overline k/k).
	$$
	Under this identification, the geometric fundamental group maps onto $G$. Therefore
	the representation
	$$
	\pi_1(X)\longrightarrow G\stackrel{\iota}{\longrightarrow}
	\GL_2(\overline{\mathbb F}_p)
	$$
	is continuous, irreducible, and geometric. If it admitted a lift to characteristic zero,
	then its restriction to the geometric fundamental group would give a characteristic-zero
	lift of the natural two-dimensional representation of $G$, contradicting the preceding
	paragraph. 
	
	Thus, in dimension at least two, not every irreducible geometric mod $p$
	representation can be lifted to characteristic zero. This shows that the
	liftability hypothesis in Theorem~\ref{main3} cannot simply be removed by a
	general lifting theorem in higher dimension. The example above comes from
	the projective case, where realization theorems for fundamental groups allow
	one to force prescribed finite quotients of the geometric fundamental group.
	Therefore, if one wants to seek a positive higher-dimensional lifting
	statement, one should at least restrict the question to affine varieties.	On the other hand, for the purpose of proving Conjecture~\ref{conj:main},
	one may reduce to the case where $X$ is affine. Thus, if one hopes for a positive lifting statement relevant to the present finiteness problem, it should formulate it in the affine case rather than in the
	projective case.

	\section*{Acknowledgements}
	
	The author would like to thank the anonymous referee for carefully reading the manuscript and for valuable comments and suggestions, which improve the quality of this paper.

	\bibliographystyle{plain}
	\bibliography{varity}

\end{document}